\documentclass[a4]{amsart}

\input xypic 
\input xy 
\xyoption{all} 
\usepackage{amssymb} 
\usepackage{bbm}
\usepackage{tikz-cd}

%%%%%%%%%%%%%%%%%%%%%%%%%%%%%
%%%%     ut-thesis settings              %%%%
%%%%%%%%%%%%%%%%%%%%%%%%%%%%%
\oddsidemargin 0.200 true in
\evensidemargin 0.200 true in
\marginparwidth 1 true in
\topmargin -0.5 true in
\textheight 9 true in
\textwidth 6.0 true in

%%%%%%%%%%%%%%%%%%%%%%%%%%%%%%

\usepackage[bookmarksnumbered, bookmarksopen,
colorlinks,citecolor=blue,linkcolor=blue,backref]{hyperref}

% Theorem Environments
\newtheorem{theorem}{Theorem}[section]
\newtheorem{proposition}[theorem]{Proposition}
\newtheorem{lemma}[theorem]{Lemma}

\theoremstyle{definition}

\newtheorem{example}[theorem]{Example}

% Abbreviations

% expression abbreviations

% list command
\newcounter{bean}

% basic diagram commands
\newcommand{\namedright}[3]{\ensuremath{#1\stackrel{#2}
 {\longrightarrow}#3}}
\newcommand{\nameddright}[5]{\ensuremath{#1\stackrel{#2}
 {\longrightarrow}#3\stackrel{#4}{\longrightarrow}#5}}

% extended diagram commands
\newcommand{\larrow}{\relbar\!\!\relbar\!\!\rightarrow}
\newcommand{\llarrow}{\relbar\!\!\relbar\!\!\larrow}

\newcommand{\lnamedright}[3]{\ensuremath{#1\stackrel{#2}
 {\larrow}#3}}
\newcommand{\lnameddright}[5]{\ensuremath{#1\stackrel{#2}
 {\larrow}#3\stackrel{#4}{\larrow}#5}}

\newcommand{\llnameddright}[5]{\ensuremath{#1\stackrel{#2}
 {\llarrow}#3\stackrel{#4}{\llarrow}#5}}

%miscellaneous commands
\newcommand{\qqed}{\hfill\Box}

%%% Body of Document
\begin{document}

%%% Title

\title{Homotopy theoretic properties of open books} 
%author two information

\author{Ruizhi Huang} 
\address{Institute of Mathematics, Academy of Mathematics and Systems Science, 
   Chinese Academy of Sciences, Beijing 100190, China} 
\email{huangrz@amss.ac.cn} 
   \urladdr{https://sites.google.com/site/hrzsea/}
   %\thanks{}
   
\author{Stephen Theriault}
\address{School of Mathematics, University of Southampton, Southampton 
   SO17 1BJ, United Kingdom}
\email{S.D.Theriault@soton.ac.uk}

%    \subjclass is required.
\subjclass[2010]{Primary 
55P35,  %Loop spaces
55P62,  %Rational homotopy theory
55Q52;  %Homotopy groups of special spaces
Secondary 
57R19,  %Algebraic topology on manifolds and differential topology
58K10, %Monodromy on manifolds
32S55%Milnor fibration; relations with knot theory
}
\keywords{open book, loop space decomposition, rational dichotomy, monodromy, Milnor open book}
\date{}
%\thanks{}

%%% Abstract

\begin{abstract} 
We study the homotopy groups of open books in terms of those of their pages and bindings. 
Under homotopy theoretic conditions on the monodromy we prove an integral decomposition result for the based loop space on an open book, and under more relaxed conditions prove a rational loop space decomposition. The latter case allows for a rational dichotomy theorem for open books, as an extension of the classical dichotomy in rational homotopy theory. As a direct application, we show that for Milnor's open book decomposition of an odd sphere with monodromy of finite order the induced action of the monodromy on the homology groups of its page cannot be nilpotent. 
\end{abstract}

\maketitle

%%%%%%%%%%%%%%%%%%%%%%%%%%%%%%%%%%%%%%%%%%%%%%%%%%%%%%%%%%%%%%%%%%%%%%%

\section{Introduction}

Open book decomposition is a convenient way to study manifolds. The purpose of this paper is to gain 
insight into the homotopy theory of open books by studying their based loop spaces. This fits into a larger program 
aimed at establishing homotopy theoretic properties of manifolds, and more generally, Poincar\'{e} 
Duality complexes~\cite{BB, BW, BT1,BT2,H,HT1,HT2,HT3,T}. 

Let $V$ be a smooth compact $(n-1)$-manifold with $\partial V \neq \emptyset$. Let $h$ be a self diffeomorphism of~$V$ which restricts to the identity on $\partial V$. Let $V_h$ be the {\it mapping torus} of $h$, defined as the quotient space $V_{h}=(V\times I)/\sim$ where $I=[0,1]$ is the unit interval and $(v,0)\sim (h(v),1)$. This has boundary 
$\partial V\times S^{1}$ and projection to the second coordinate induces a fibre bundle
\begin{equation}\label{Vhbundleeq}
V\stackrel{}{\longrightarrow}V_h\stackrel{\pi}{\longrightarrow} S^1.
\end{equation} 
Following the notation in~\cite{BC}, write $(\partial V\times D^{2})\cup_{{\rm id}} V_{h}$ for the union of $\partial V\times D^{2}$ and $V_{h}$ over 
the common subspace $\partial V\times S^{1}$. 
A closed $n$-manifold $M$ is an {\it open book} if there is a diffeomorphism 
\begin{equation}\label{openbookdefeq}
M\cong (\partial V\times D^2)\cup_{\rm id} V_h,
\end{equation}
for some $V$ and $h$ as above. The map $h$ is called the {\it monodromy} of the open book.

Open books are of important interest in both topology and geometry. For instance, the fundamental open book theorem of Winkelnkemper \cite{W} states that a simply-connected manifold of dimension greater than $6$ is an open book if and only if its signature is $0$. Classical work of Milnor~\cite{Mil} provides explicit open book decompositions of odd dimensional spheres. 
Other historical applications of open books can be found in \cite[Appendix]{Ran}. More recently, Gitler and Lopez de Medrano~\cite{GL} used open books as a tool to show that certain families of manifolds arising from combinatorial constructions are diffeomorphic to connected sums of products of spheres. In the context of contact geometry, a remarkable work by Giroux \cite{Gi} showed that a contact manifold admits an open book structure that is compatible with the contact structure. Very recently, Bowden and Crowley \cite{BC} gave a topological obstruction to the existence of an open book structure on a contact manifold that has flexible pages. 

There are other equivalent descriptions of open books.
Up to homeomorphism, the open book~$M$ can be obtained from $V_h$ by identifying $(x, t)$ with $(x, s)$ for each $x\in \partial V$ and $t$, $s\in S^1$.
For $t\in S^1$ the fibre $\pi^{-1}(t)\cong V\times {t}$ of $V_h$ is a codimension $1$ submanifold of $M$ whose image is the $t$-th {\it page} of the open book. And the image of $\partial V\times S^1$ in $M$ is a closed codimension $2$ submanifold called the {\it binding} of the open book.
This description is the reason behind the name ``open book".

Our first result establishes an integral homotopy decomposition for the based loops on open books 
given a condition on the diffeomorphism $h$.  Recall that the loop space $\Omega X$ of a based 
topological space $X$ is the space of all pointed, continuous maps from the circle into $X$, while the 
suspension~$\Sigma X$ is the double cone of $X$. 

\begin{theorem}\label{zMdecthm}
Let $M$ be a path-connected open book for which there is a diffeomorphism 
$M\cong (\partial V\times D^2)\cup_{\rm id} V_h$. Suppose that $h\simeq {\rm id}$ relative to $\partial V$. Then there is a homotopy equivalence
\[
\Omega M\simeq \Omega V\times \Omega \Sigma^2 F 
\]
where the space $F$ is the homotopy fibre of the inclusion $\partial V\stackrel{}{\rightarrow} V$. Consequently, there is an isomorphism 
\[
\pi_\ast(M)\cong \pi_\ast(V)\oplus \pi_\ast(\Sigma^2 F).
\]
\end{theorem}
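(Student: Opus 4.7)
The plan is to realize $M$, up to homotopy, as a space fibred over $V$ with fibre $\Sigma^{2}F$ that admits a section, and then to apply the standard loop space splitting for such fibrations.

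First, since $h\simeq \mathrm{id}$ relative to $\partial V$, the mapping torus $V_{h}$ is homotopy equivalent to $V\times S^{1}$ via a homotopy that is constant on $\partial V\times S^{1}$, so
\[
M\simeq (V\times S^{1})\cup_{\partial V\times S^{1}}(\partial V\times D^{2}).
\]
Since $\partial V\times D^{2}$ deformation retracts onto $\partial V$, this identifies $M$ with the homotopy pushout of
\[
V\times S^{1}\xleftarrow{\;i\times \mathrm{id}\;}\partial V\times S^{1}\xrightarrow{\;\mathrm{pr}\;}\partial V,
\]
where $i\colon\partial V\hookrightarrow V$ is the inclusion and $\mathrm{pr}$ is the projection onto the first factor.

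Next, the projections $V\times S^{1}\to V$ and $\partial V\times D^{2}\to \partial V\hookrightarrow V$ agree on $\partial V\times S^{1}$ and hence assemble into a map $p\colon M\to V$. The composite $V=V\times\{\ast\}\hookrightarrow V\times S^{1}\hookrightarrow M$ provides a section $s$ of $p$. To identify the homotopy fibre of $p$, I would invoke Mather's cube theorem: the homotopy fibres over the basepoint of $V$ of the four corners of the pushout square form a new square which is itself a homotopy pushout. The fibres of $V\times S^{1}\to V$, $\partial V\to V$, and $\partial V\times S^{1}\to V$ are $S^{1}$, $F$, and $F\times S^{1}$ respectively, and a direct check shows the two induced maps are the coordinate projections $S^{1}\xleftarrow{\;\mathrm{pr}\;} F\times S^{1}\xrightarrow{\;\mathrm{pr}\;} F$. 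The homotopy pushout of these two projections is by definition the join $F\ast S^{1}\simeq \Sigma(F\wedge S^{1})\simeq \Sigma^{2}F$.

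Thus one obtains a fibration sequence $\Sigma^{2}F\to M\xrightarrow{p} V$ admitting a section, and looping produces a principal fibration $\Omega\Sigma^{2}F\to \Omega M\to \Omega V$ which is split as an $H$-space product by $\Omega s$. This yields the homotopy equivalence $\Omega M\simeq \Omega V\times \Omega\Sigma^{2}F$, and passing to homotopy groups gives the claimed direct sum decomposition. The step requiring the most care is the Mather cube argument, where one must verify that the induced maps on homotopy fibres are indeed the two coordinate projections from $F\times S^{1}$; once this is confirmed, the identification of the resulting join with $\Sigma^{2}F$ and the loop space splitting produced by the section $s$ are both standard.
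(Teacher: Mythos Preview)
Your argument is correct and is a genuinely different route from the paper's. The paper works with the \emph{twisted double} description $M\cong (V\times I)\cup_{e(h)}(V\times I)$: it first proves a preliminary lemma that the homotopy fibre $P$ of the folding map $DV\to V$ is $\Sigma F$ (via one application of Mather's second cube theorem), and then a second cube over the pushout $V\times I\leftarrow DV\to V\times I$ identifies the homotopy fibre of $M\to V$ as $\Sigma P\simeq\Sigma^{2}F$; the splitting comes from observing that the connecting map $\Omega V\to\Sigma P$ is null because $V\times I\to M$ has contractible fibre. For the general case $h\simeq\mathrm{id}$ rel $\partial V$, the paper builds the retraction $\theta'\colon M\to V$ directly from the homotopy rather than first trivializing the mapping torus. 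By contrast, you use the open book pushout $V\times S^{1}\leftarrow\partial V\times S^{1}\to\partial V$ and obtain $\Sigma^{2}F$ in a single cube as the join $F\ast S^{1}$, with the splitting coming from an explicit section. Your version is more economical here (one cube instead of two, and the join identification is pleasant); the paper's twisted-double framework is chosen because the same cube reappears in the rational result where only $e(h)_{\ast}$ is assumed trivial on rational homotopy groups, a hypothesis that does not let one trivialize $V_{h}$ as you do.
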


Theorem \ref{zMdecthm} is proved as a consequence of the following purely homotopy 
theoretic result, which is interesting in its own right and can be applied elsewhere. Recall that $X\wedge Y$ 
is the smash product of two based spaces $X$ and $Y$.  

\begin{theorem} 
   \label{productpushout} 
   Let $A$, $B$, $C$ and $D$ be path-connected spaces and suppose that there is a homotopy pushout 
   \[\diagram 
        A\times B\rto^-{1\times g}\dto^{f\times 1} & A\times D\dto \\ 
        C\times B\rto & Q 
     \enddiagram\] 
   that defines the space $Q$. Then there is a homotopy equivalence 
   \[\Omega Q\simeq\Omega C\times\Omega D\times\Omega \Sigma (F\wedge G)\] 
   where $F$ and $G$ are the homotopy fibres of $f$ and $g$ respectively. 
\end{theorem} 

To deal with open books where the diffeomorphism $h$ need not be homotopic to the identity 
map relative to $\partial V$ we turn to rational homotopy theory. The argument will involve 
another equivalent description of open books; see for instance \cite{Q, BC}. Let 
\[DV=\partial(V\times I)\] 
be the {\it (trivial) double} of $V$ and let 
\(i\colon\namedright{DV}{}{V\times I}\) 
be the inclusion of the boundary. 
%There is a diffeomorphism $DV\cong \partial (V\times I)$ after smoothing corners and we will regard $DV$ as $\partial(V\times I)$ in the sequel. 
Define a self-diffeomorphism $e(h)\colon DV\longrightarrow DV$ that extends~$h$ by  
$e(h)(v,0)=v$, $e(h)(v,t)=(v,t)$ if $v\in\partial V$ and $e(h)(v,1)=h(v)$. Let 
$(V\times I)\cup _{e(h)} (V\times I)$ be the manifold obtained by gluing together the image of $i$ in the left 
copy of $V\times I$ and the image of $e(h)\circ i$ in the right copy of $V\times I$. 
The open book~$M$ in~(\ref{openbookdefeq}) can be expressed as a {\it twisted double} via a diffeomorphism 
\begin{equation}\label{openbookt2def}
M\cong (V\times I)\cup_{e(h)}(V\times I). 
\end{equation}  

Recall that a path-connnected space $X$ is called {\it nilpotent} if its fundamental group $\pi_1(X)$ is a nilpotent group and it acts nilpotently on the higher homotopy groups $\pi_i(X)$ for $i\geq 2$. Simply-connected spaces, connected $H$-spaces and loop spaces are examples of nilpotent spaces. 

\begin{theorem}\label{Mdecthm}
Let $M$ be a path-connected open book such that $M\cong (V\times I)\cup_{e(h)}(V\times I)$, 
where~$V$ and $\partial V$ are path-connected and nilpotent, and the inclusion of the boundary 
\(\namedright{\partial V}{}{V}\) 
is $1$-connected. Suppose that $i_{\ast}\circ e(h)_\ast: \pi_\ast(DV)\otimes \mathbb{Q}\rightarrow\pi_\ast(DV)\otimes \mathbb{Q}$ equals $i_{\ast}$. Then there is a rational homotopy equivalence
\[
\Omega M\simeq_{\mathbb{Q}} \Omega V\times \Omega \Sigma^2 F,
\]
where the space $F$ is the homotopy fibre of the inclusion $\partial V\stackrel{}{\rightarrow} V$.
\end{theorem}
The decomposition in Theorem \ref{Mdecthm} is a rational version of that in Theorem \ref{zMdecthm} under a looser condition on the monodromy $h$. In Theorem \ref{Mdecthm}, the nilpotence conditions 
on $V$ and~$\partial V$ are mild. 
Any simply-connected space is nilpotent, and for a simply-connected open book of dimension greater than $6$, Winkelnkemper~\cite{W} showed that both $V$ and $\partial V$ can be chosen to be simply-connected. 

As an application of Theorem~\ref{Mdecthm} we study the relationship between the rational homotopy groups of manifolds and their possible open book structures. In rational homotopy theory, there is a classical dichotomy characterizing rational spaces~\cite[page 452]{FHT}, \cite[Section 2.5.3]{FOT}.

\textit{Any connected nilpotent space $X$ with rational homology of finite type and finite rational Lusternik-Schnirelmann category is either:
\begin{itemize}
\item rationally elliptic, that is, $\pi_\ast(X)\otimes \mathbb{Q}$ is finite dimensional, or else
\item rationally hyperbolic, that is, $\pi_\ast(X)\otimes \mathbb{Q}$ grows exponentially.
\end{itemize}
}
\noindent 
A connected finite dimensional $CW$-complex, for example, has finite Lusternik-Schnirelmann 
category. Since a smooth compact manifold can be given the structure of a finite dimensional $CW$-complex, it has rational homology of finite type and finite rational Lusternik-Schnirelmann category.

\begin{theorem}
\label{openbookthm}
Let $M$ be a path-connected $n$-manifold satisfying an open book decomposition $M\cong (\partial V\times D^2)\cup_{\rm id} V_h$, where $V$ and $\partial V$ are path-connected and nilpotent, and the inclusion 
of the boundary  
\(\namedright{\partial V}{}{V}\) 
induces an epimorphism of fundamental groups. Suppose that one of the following holds: 
\begin{itemize}
\item[(a)] the induced homomorphism 
$i_{\ast}\circ e(h)_\ast: \pi_\ast(DV)\otimes \mathbb{Q}\rightarrow \pi_\ast(V\times I)\otimes \mathbb{Q}$ 
equals $i_{\ast}$;
\item[(b)] $i_{\ast}\circ e(h)_\ast^m= i_{\ast}$ for some $m\in \mathbb{Z}^{+}$ and the monodromy $h$ acts nilpotently on the homotopy groups $\pi_{\ast}(V)$.
\end{itemize}
Then either:
\begin{itemize}
\item[(1)] $M$ is rationally elliptic, in which case $V$ is also rationally elliptic, the homotopy fibre of 
the inclusion $\partial V\hookrightarrow V$ is rationally homotopy equivalent to a sphere $S^{l}$, and 
\[
\pi_\ast(M)\otimes \mathbb{Q}\cong (\pi_\ast(V)\otimes \mathbb{Q})\oplus (\pi_\ast(S^{l+2})\otimes \mathbb{Q}); 
\]
\item[(2)] $M$ is rationally hyperbolic, in which case either $V$ is rationally hyperbolic or the homotopy fibre of the inclusion $\partial V\hookrightarrow V$ is not rationally homotopy equivalent to a sphere.
\end{itemize}
\end{theorem} 

In \cite{BC}, Bowden and Crowley proved that if $M$ is a contact manifold admitting an open book structure whose pages are flexible Weinstein manifolds, then the map $e(h): DV\stackrel{e(h)}{\longrightarrow} DV$ has the property that $e(h)_\ast: H_\ast(DV;\mathbb{Z})\rightarrow H_\ast(DV;\mathbb{Z})$ is the identity map up to half the dimension of $V$. 
In contrast, our conditions in parts~(a) and~(b) of Theorem~\ref{openbookthm} for $e(h)_\ast$ on the rational homotopy groups can be viewed as a sort of {\it homotopy order condition} on the monodromy. The condition in part~(a) is satisfied, for example, if the monodromy $h$ is homotopic to the identity and the condition in part~(b) is satisfied, for example, if $h$ is of finite order. Viewed this way, 
Theorem~\ref{openbookthm} gives a rational dichotomy of open books with a homotopy order condition. Having one of the conditions in parts~(a) and~(b) hold is necessary, this is illustrated in Proposition~\ref{milcoro} using Milnor's open book decompositions of odd spheres. Further, the nilpotent action condition for the monodromy in case (b) is necessary as illustrated in Example~\ref{Keg}. In Section \ref{sec:necessity}, we also compare Theorem \ref{openbookthm} with a result of Grove and Halperin \cite{GH} on the rational ellipticity of double mapping cylinders.

It is worth pointing out that, as methods are homotopy theoretical, the salient points in 
the arguments are not that $V$ is a manifold and $h$ is a diffeomorphism but that the inclusion 
\(\namedright{\partial V}{}{V}\) 
is not a rational equivalence and $h$ is a homotopy equivalence.

%------------------------%
The paper is organized as follows. 
In Section \ref{sec: 0tdoub} we prove the general result Theorem \ref{productpushout}, give an integral loop space decomposition for open books with monodromy homotopic to the identity map, and prove Theorem \ref{zMdecthm}. 
In Section~\ref{sec: doub} we give a loop space decomposition of the double of a manifold with boundary.
We then turn to the rational homotopy of open books. In Section \ref{sec: tdoub} we consider a special case of the homotopy order condition for open books, give a rational loop space decomposition for such open books, and prove Theorem \ref{Mdecthm}.  
Section \ref{sec: proofthm1} is devoted to proving the dichotomy result, Theorem \ref{openbookthm}, and Section~\ref{sec:necessity} gives an example proving the necessity of the homotopy order condition in Theorem~\ref{openbookthm}.
Since our techniques in homotopy theory are based on the two cube theorems of Mather \cite{M}, we state and comment on them in Appendix \hyperref[AppendixA]{A} for those interested readers from manifold topology and geometry.

\bigskip

\noindent{\bf Acknowledgements.}
The first author was supported in part by the National Natural Science Foundation of China (Grant nos. 12331003 and 12288201), the National Key R\&D Program of China (No. 2021YFA1002300), the Youth Innovation Promotion Association of Chinese Academy Sciences, and the ``Chen Jingrun'' Future Star Program of AMSS. 
The authors would like to thank Xiaoyang Chen for suggesting a study of the rational ellipticity of open books and for related discussions, and Zhengyi Zhou for helpful discussions on Milnor fibrations. 

The authors would like to thank one referee for pointing out the necessity of the nilpotence 
condition in Theorem~\ref{openbookthm} and for Example~\ref{Keg}, and are indebted to another referee for 
many valuable comments, including suggesting Theorem~\ref{productpushout} as a means to prove 
Theorem~\ref{zMdecthm}.

%%%%%%%%%%%%%%%%%%%%%%%%%%%%%%%%%%%%%%%%%%%%%%%%%v
\section{An integral loop space decomposition of open books with homotopically trivial monodromy}
\label{sec: 0tdoub}

As we will need to work with homotopy fibrations and homotopy groups, 
throughout it will be assumed that all spaces and maps are pointed. In particular, if $V$ is a compact 
manifold with boundary $\partial V\neq \emptyset$ then assume that a basepoint $v_{0}$ for $V$ 
has been chosen that is also in $\partial V$. 

We begin by proving the general result, Theorem \ref{productpushout}. Let $I=[0,1]$ be the unit interval with $0$ as basepoint. 
For path-connected spaces $X$ and $Y$, the (reduced) \emph{join} is the quotient space 
\[X\ast Y=(X\times I\times Y)/\sim\] 
where $(x,0,y)\sim (x',0,y)$, $(x,1,y)\sim (x,1,y')$ and $(\ast,t,\ast)\sim (\ast,0,\ast)$ for all  
$x,x'\in X$, $y,y'\in Y$ and $t\in I$. It is well known that there is a homotopy equivalence 
$X\ast Y\simeq\Sigma X\wedge Y$. 

\begin{proof}[Proof of Theorem \ref{productpushout}] 
First observe that there is a pushout map 
\begin{equation} 
\begin{aligned}
  \label{fgpo} 
  \xymatrix{ 
     A\times B\ar[r]^{1\times g}\ar[d]^{f\times 1} & A\times D\ar@/^/[ddr]^{f\times 1}\ar[d] & \\ 
     C\times B\ar[r]\ar@/_/[drr]_{1\times g} 
         & Q\ar@{.>}[dr]^(0.4){\theta} & \\ 
     & & C\times D }
     \end{aligned}
\end{equation}  
for some map $\theta$. Let $H$ be the homotopy fibre of $\theta$. Pulling back 
\(\namedright{H}{}{Q}\) 
with each of the maps in the homotopy pushout defining $Q$ then gives a homotopy 
commutative cube  
\begin{gather}
\begin{aligned}
\xymatrixcolsep{1.5pc}
\xymatrixrowsep{1.5pc}
\xymatrix{
 & F\times G \ar[dl]_{b}  \ar[rr]^{a}  \ar[dd]|!{[d];[d]}\hole && 
 F \ar[dl]  \ar[dd]^{}  \\
 G \ar[dd]^{}  \ar[rr]  &&
H \ar[dd] \\
  & A\times B \ar[dl]_{f\times 1}  \ar[rr]^<<<<{1\times g}|!{[r];[r]}\hole  && 
 A\times D \ar[dl]    \\
C\times B   \ar[rr]  &&
Q
}
\end{aligned}
\label{generalfgcube}
\end{gather} 
in which the bottom face is a homotopy pushout, the four sides are homotopy pullbacks, and the 
maps $a$ and $b$ are induced maps of fibres. Mather's second cube theorem (Theorem \ref{2cubthm}) 
implies that the top face is a homotopy pushout. 

We now identify the homotopy classes of $a$ and $b$. The rear face of the cube is the left square in the 
homotopy fibration diagram 
\[\diagram 
     F\times G\rto\dto^{a} & A\times B\rto^-{f\times g}\dto^{1\times g} & C\times D\ddouble \\ 
     F\times\ast\rto & A\times D\rto^-{f\times 1} & C\times D  
  \enddiagram\] 
where the $\ast$ in the lower left corner has been added to make clear the lower row 
is a product of homotopy fibrations. Observe that the entire diagram is a product of two 
homotopy fibration diagrams, one for the left factors and one for the right factors. 
This implies that $a$ is the product $1\times\ast$, that is, $a$ is homotopic to the projection 
\(\namedright{F\times G}{\pi_{1}}{F}\) 
onto the first factor. Similarly, $b$ is homotopic to the projection 
\(\namedright{F\times G}{\pi_{2}}{G}\) 
onto the second factor. 

The homotopy pushout in the top face of~(\ref{generalfgcube}) therefore implies that $H$ is 
homotopy equivalent to the pushout of the projections $\pi_{1}$ and $\pi_{2}$, which is 
homotopy equivalent to $F\ast G$. Therefore, we obtain a homotopy fibration 
\(\nameddright{F\ast G}{}{Q}{\theta}{C\times D}\). 
The pushout diagram~(\ref{fgpo}) defining $\theta$ implies that both $C$ and $D$ map to $Q$ 
and give a composite 
\(\nameddright{C\vee D}{}{Q}{\theta}{C\times D}\)  
that is homotopic to the inclusion of the wedge into the product. By~\cite{Ga}, this inclusion 
has a right homotopy inverse after looping, implying that $\Omega\theta$ has a right 
homotopy inverse. Thus the homotopy fibration 
\(\nameddright{F\ast G}{}{Q}{\theta}{C\times D}\) 
splits after looping to give a homotopy equivalence 
$\Omega Q\simeq\Omega C\times\Omega D\times\Omega (F\ast G)$. 
\end{proof} 

To apply Theorem~\ref{productpushout} in the context of open books a lemma is first required. Let 
\(\iota\colon\namedright{\partial V}{}{V}\) 
be the inclusion of the boundary. 

\begin{lemma} 
   \label{Vsplit} 
   Let $V$ be a smooth compact $(n-1)$-manifold with nonempty boundary and $h$ a self-diffeomorphism 
   of $V$ that restricts to the identity on $\partial V$. If $h\simeq {\rm id}$ relative to $\partial V$ then there is a  
   homotopy equivalence $V_{h}\simeq V\times S^{1}$ satisfying a commutative diagram 
   \[\diagram 
         \partial V_{h}\rto^-{=}\dto & \partial V\times S^{1}\dto^{\iota\times 1} \\ 
         V_{h}\rto^-{\simeq} & V\times S^{1}. 
     \enddiagram\] 
\end{lemma} 

\begin{proof} 
Recall that $V_{h}=(V\times I)/\sim$ where $(v,0)\sim (h(v),1)$ and there is a fibre bundle 
\(\nameddright{V}{j}{V_{h}}{\pi}{S^{1}}\) 
where $j(v)=[v,0]$ and $\pi([v,t])=e^{2\pi it}$ is the projection to the second factor. 
Since ${\rm id}\simeq h^{-1}$ relative 
to $\partial V$, there is a homotopy 
\(H\colon\namedright{V\times I}{}{V}\) 
such that $H_{0}={\rm id}$, $H_{1}=h^{-1}$ and the restriction of $H$ to $\partial V\times I$ is the projection 
onto $\partial V$. 
Define $\overline{H}: V\times I \longrightarrow V\times S^1$ by $\overline{H}(v, t)=(H(v,t), q(t))$ with $q(t)=e^{2\pi it}$. Since $\overline{H}(h(v), 1)=(h^{-1}(h(v)), 1)=(v,1)=\overline{H}(v, 0)$, the map $\overline{H}$ reduces to a map 
\[
G: V_h\longrightarrow V\times S^1
\]
such that $G([v, t])=(H(v,t), q(t))$. In particular, $(\pi_2\circ G)([v, t])=q(t)=\pi([v,t])$, where $\pi_2: V\times S^1 \longrightarrow S^1 $ is the projection, that is $\pi_2\circ G=\pi_2$. 
%$(G\circ j)(v)=G([v, 0])=(v, 1)=i_1(v)$, where $i_1: V\longrightarrow V\times S^1$ is the inclusion.
For any fixed $q(t)\in S^1$, the restriction on fibres $G: \pi^{-1}(q(t))\stackrel{}{\longrightarrow} \pi_2^{-1}(q(t))=V\times \{q(t)\}$ is a homotopy equivalence as $H_t$ is. Hence~$G$ 
induces a fibrewise homotopy equivalence between the bundle 
\(\nameddright{V}{j}{V_{h}}{\pi}{S^{1}}\) 
and the trivial bundle 
\(\nameddright{V}{}{V\times S^{1}}{\pi_{2}}{S^{1}}\). 
Since the restriction of $H$ to $\partial V\times I$ is the projection 
onto $\partial V$, the restriction of 
this fibrewise homotopy equivalence $G$ to $\partial V_{h}$ is the inclusion 
\(\namedright{\partial V\times S^{1}}{\iota\times 1}{V\times S^{1}}\), 
giving the asserted commutative diagram. 
\end{proof} 

We can now prove a homotopy decomposition for the based loops on a family of open books. 

\begin{proof}[Proof of Theorem~\ref{zMdecthm}] 
Recall from~(\ref{openbookdefeq}) that $M\cong (\partial V\times D^{2})\cup_{\partial V\times S^{1}} V_{h}$. 
By hypothesis, $h\simeq\rm{id}$ relative to $\partial V$, so by 
Lemma~(\ref{Vsplit}) the space $V_{h}$ may be replaced up to homotopy equivalence with 
$V\times S^{1}$ and in a way that is compatible with the inclusion of the boundary 
$\partial V_{h}=\partial V\times S^{1}$. Thus~$M$ is homotopy equivalent to 
$(\partial V\times D^{2})\cup_{\partial V\times S^{1}} (V\times S^{1})$, that is, there is a homotopy pushout 
\begin{equation} 
  \label{openbookproductpushout} 
  \diagram 
    \partial V\times S^{1}\rto^-{1\times j}\dto^{\iota\times 1} & \partial V\times D^{2}\dto  \\ 
    V\times S^{1}\rto & M 
  \enddiagram 
\end{equation}  
where $j$ is the standard inclusion. By Theorem~\ref{productpushout}, there is a homotopy equivalence 
\[\Omega M\simeq\Omega V\times\Omega D^{2}\times\Omega (F\ast G)\] 
where $F$ and $G$ are the homotopy fibres of $\iota$ and $j$ respectively. As $D^{2}$ is contractible 
we obtain $G\simeq S^{1}$ and therefore $F\ast G\simeq\Sigma^{2} F$. Thus 
$\Omega M\simeq\Omega V\times\Omega\Sigma^{2} F$. 
\end{proof}

%%%%%%%%%%%%%%%%%%%%%%%%%%%%%%%%%%%%%%%%%%%%%%%%%%%%%%%%%%%%%%%%%%%%%%%
\section{A loop space decomposition of the double of $V$}
\label{sec: doub}
By definition, the double of $V$ is $DV=\partial(V\times I)$. Define the folding map 
\[p: DV\rightarrow V\] 
by $p(v,0)=v$, $p(v,t)=v$ for $v\in\partial V$ and $p(v,1)=v$. Let $P$ be the homotopy fibre of~$p$.

\begin{lemma}\label{DVdeclemma}
There are homotopy equivalences
\[
\Omega DV\simeq \Omega V\times \Omega \Sigma F \ \ \ ~{\rm and} \ \ \ ~P\simeq \Sigma F.
\]
\end{lemma}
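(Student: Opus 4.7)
My plan is to recognize $DV = \partial(V \times I)$ as the homotopy pushout of two copies of $V$ glued along $\partial V$, and then apply Mather's cube theorem to identify the homotopy fibre of the folding map~$p$. The inclusion $\partial V\hookrightarrow V$ is a cofibration, so the strict pushout $V\cup_{\partial V}V$ is a homotopy pushout and is homeomorphic to $DV$. Under this identification, $p\colon DV\to V$ is the canonical map from the pushout to $V$, compatible with $\mathrm{id}\colon V\to V$ on each copy and with $\iota\colon\partial V\to V$ on the glued locus. In particular, there is a commutative cube whose bottom face is the pushout square for $DV$, whose top face will be the pullback of that square along $\ast\to V$, and whose vertical edges are the induced maps on homotopy fibres.

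Pulling back each vertex of the bottom along $\ast\to V$ yields $F$ over $\partial V$, a point over each copy of $V$, and $P$ over $DV$. The four side squares of this cube are homotopy pullbacks by the pasting lemma: for each edge $A\to B$ of the bottom pushout one has
\[
A\times^{h}_{B}B' \;\simeq\; A\times^{h}_{B}\bigl(B\times^{h}_{V}\ast\bigr) \;\simeq\; A\times^{h}_{V}\ast \;\simeq\; A'.
\]
Hence Mather's cube theorem applies, giving that the top face is itself a homotopy pushout. The top face is $\ast\leftarrow F\to\ast$, whose pushout is $\Sigma F$, so $P\simeq \Sigma F$.

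For the loop space decomposition, the key observation is that $p$ admits a section: the inclusion $j_{1}\colon V\hookrightarrow DV$ of either copy of $V$ into the double satisfies $p\circ j_{1}=\mathrm{id}_{V}$. Looping the homotopy fibration $\Sigma F\to DV\to V$ produces a homotopy fibration
\[
\Omega\Sigma F\;\longrightarrow\;\Omega DV\;\longrightarrow\;\Omega V
\]
with H-space section $\Omega j_{1}$. Define $\mu\colon \Omega V\times \Omega\Sigma F\to \Omega DV$ by $(a,b)\mapsto \Omega j_{1}(a)\cdot b$, using loop multiplication in $\Omega DV$. The long exact sequence on $\pi_{\ast}$ collapses into split short exact sequences because of the section, and $\mu$ realizes this splitting on $\pi_{\ast}$; by Whitehead's theorem, $\mu$ is a homotopy equivalence. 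Substituting $P\simeq \Sigma F$ yields $\Omega DV\simeq \Omega V\times \Omega\Sigma F$.

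The main technical step is the invocation of Mather's cube theorem to compute~$P$; the verification that the side squares are homotopy pullbacks is routine, but it is the crucial input that transforms the problem into the easy suspension pushout $\ast\cup^{h}_{F}\ast$. The existence of the section~$j_{1}$ and the resulting splitting of the loop fibration via the H-space structure are then formal.
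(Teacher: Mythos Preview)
Your proof is correct and follows essentially the same strategy as the paper: identify $DV$ as the homotopy pushout $V\cup_{\partial V}V$, apply Mather's second cube theorem over the folding map $p$ to obtain $P\simeq\Sigma F$, and then split the fibration $P\to DV\to V$ after looping. The only cosmetic difference is in the final step: the paper reads off from the right face of the cube that the connecting map $\delta\colon\Omega V\to P$ factors through a point and is therefore null homotopic, whereas you invoke the explicit section $j_{1}$ of $p$ and multiply in $\Omega DV$; these are two phrasings of the same splitting argument.
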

\begin{proof} 
Collapsing the cylinder $\partial V\times I$ in $DV=\partial(V\times I)=(V\times\{0\})\cup(\partial V\times I)\times (V\times\{1\})$ 
implies that there is a pushout, up to homotopy equivalences, 
 \[
\begin{gathered}
\xymatrix{
\partial V \ar[r]^{\iota} \ar[d]^{\iota} &
V\ar[d]^{\jmath_2} \\
V\ar[r]^{\jmath_1} &
DV
}
\end{gathered}
\] 
where the maps $\jmath_i$ for $i=1$, $2$ are the inclusions into the top and bottom copies of $V$. 

Compose each of the four corners  of the pushout with the folding map $p$ and take homotopy fibres. Noting that $p\circ\jmath_{1}$ and $p\circ\jmath_{2}$ are both the identity map on~$V$, we obtain homotopy fibrations
\[
\begin{split}
   P\stackrel{\widetilde{f}}{\longrightarrow} DV\stackrel{p}{\longrightarrow} V \\ 
   \ast \stackrel{}{\longrightarrow}  V \stackrel{p\circ\jmath_{1}}{\longrightarrow}  V \\
   \ast \stackrel{}{\longrightarrow}  V \stackrel{p\circ\jmath_{2}}{\longrightarrow}  V \\ 
    F\stackrel{f}{\longrightarrow}  \partial V \stackrel{\iota}{\longrightarrow}  V 
\end{split}
\] 
that define the spaces $P$ and $F$ and the maps $\widetilde{f}$ and $f$. 
In 
each of these homotopy fibrations the map from the total space to the base factors through 
the map 
\(DV\stackrel{p}{\longrightarrow}{V}\), 
so we obtain a homotopy commutative cube 
\begin{gather}
\begin{aligned}
\xymatrixcolsep{1.5pc}
\xymatrixrowsep{1.5pc}
\xymatrix{
 & F \ar[dl]  \ar[rr]  \ar[dd]^<<<<<{f}|!{[d];[d]}\hole && 
 \ast \ar[dl]  \ar[dd]^{}  \\
 \ast \ar[dd]^{}  \ar[rr]  &&
P \ar[dd]^<<<<<{\widetilde{f}} \\
  & \partial V \ar[dl]_{\iota}  \ar[rr]^<<<<<{\iota}|!{[r];[r]}\hole  && 
 V \ar[dl]_{\jmath_2}    \\
V   \ar[rr]^{\jmath_1}  &&
DV.
}
\end{aligned}
\label{cubeFPdiag}
\end{gather} 
in which the bottom face is a homotopy pushout and the four sides are homotopy pullbacks. Mather's second cube theorem (Theorem \ref{2cubthm}) implies that the top face is a homotopy pushout.
In particular, the top face of the cube being a homotopy pushout implies that $P\simeq \Sigma F$ while the right face of the cube being a homotopy pullback implies that there is a homotopy fibration diagram 
 \[
\begin{gathered}
\xymatrix{
\Omega V \ar@{=}[d] \ar[r]  & 
\ast \ar[d] \ar[r]^{} &
V \ar[d]^{\jmath_1} \\
\Omega V \ar[r]^{\delta} &
P \ar[r]^{\widetilde{f}}&
DV.
}
\end{gathered}
\] 
The right square implies that the connecting map $\delta$ is null homotopic. It follows that there is a homotopy equivalence 
\[
\Omega DV \simeq \Omega V \times \Omega P\simeq \Omega V \times \Omega \Sigma F,
\]
proving the lemma.
\end{proof}

%----------------------------------------------------------------------------------------------------------------------------------------------------------------------------------------------------------%
\section{A rational loop space decomposition of certain open books}
\label{sec: tdoub} 
By (\ref{openbookt2def}) there is a pushout diagram
 \begin{equation}
 \label{Mpushdiag}
\begin{gathered}
\xymatrix{
DV  \ar[r]^{i\circ e(h)}  \ar[d]^{i}&
V\times I\ar[d]^{j_2} \\
V\times I\ar[r]^{j_1} &
M,
}
\end{gathered}
\end{equation}
where $i$ is the inclusion 
\(DV=\partial(V\times I)\hookrightarrow V\times I\), 
and $j_1$ and $j_{2}$ are the induced injections.
Since $i$ is a cofibration, the pushout (\ref{Mpushdiag}) is also a homotopy pushout. 

In this section, we study the loop space homotopy type of $M$ via (\ref{Mpushdiag}) with a looser condition on the monodromy. To do so we have to pass to rational homotopy. Some lemmas are needed to prepare the way. 
Recall that $P$ is the homotopy fibre of the folding map $p: DV\rightarrow V$. 

\begin{lemma}\label{fibrei=plemma}
The homotopy fibre of the inclusion $i: DV\rightarrow V\times I$ is homotopy equivalent to $P$.
\end{lemma}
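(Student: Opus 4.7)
The plan is to exploit the fact that $V\times I$ deformation retracts onto $V$, so the inclusion $i\colon DV\to V\times I$ is, up to homotopy equivalence on the target, the same as the folding map $p\colon DV\to V$.

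More precisely, I would first introduce the canonical projection $q\colon V\times I\to V$, which is a homotopy equivalence (with homotopy inverse the inclusion $V\hookrightarrow V\times I$ at $t=0$). Next, I would verify the identity $q\circ i=p$ by checking on the three pieces making up $DV=\partial(V\times I)=(V\times\{0\})\cup(\partial V\times I)\cup(V\times\{1\})$: for $(v,0)$ one gets $v$, for $(v,t)$ with $v\in\partial V$ one gets $v$, and for $(v,1)$ one gets $v$, which matches the definition of the folding map $p$ given in Section~\ref{sec: doub}.

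Having established the commutative triangle
\[
\xymatrix{
DV \ar[r]^-{i}\ar[dr]_-{p} & V\times I\ar[d]^-{q} \\
& V
}
\]
with $q$ a homotopy equivalence, I would conclude by invoking the standard fact that homotopy fibres are invariant under homotopy equivalence of the base: the induced map from the homotopy fibre of $i$ to the homotopy fibre of $p$ fits into a map of homotopy fibration sequences in which the middle map is the identity on $DV$ and the base map $q$ is a homotopy equivalence, so the induced map on fibres is a homotopy equivalence. Since the homotopy fibre of $p$ is $P$ by definition, this yields the claimed equivalence.

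There is essentially no obstacle here; the statement is a direct consequence of the homotopy equivalence $V\times I\simeq V$ and the factorization $p=q\circ i$. The only thing that requires any care is making the basepoint and relative-to-$\partial V$ bookkeeping consistent with the pointed setup fixed at the start of Section~\ref{sec: doub}, but the chosen basepoint $(v_0,0)\in DV\subset V\times I$ mapping to $v_0\in V$ handles this automatically.
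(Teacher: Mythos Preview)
Your proposal is correct and is essentially the same argument as the paper's: the paper uses the projection $V\times I\to V$ (a homotopy equivalence), notes that its composite with $i$ equals $p$, and concludes that the homotopy fibre of $i$ agrees with that of $p$, namely $P$. Your version simply spells out the verification of $q\circ i=p$ on the three pieces of $DV$ and the five-lemma/homotopy-invariance step in more detail.
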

\begin{proof} 
The projection 
\(\namedright{V\times I}{\pi}{V}\) 
is a homotopy equivalence, so the homotopy fibre of $i$ is homotopy equivalent to the homotopy 
fibre of $\pi\circ i$. But $\pi\circ i=p$ and, by definition, the homotopy fibre of $p$ is $P$. 
\end{proof}

Recall that $F$ is the homotopy fibre of the inclusion of the boundary $\iota: \partial V\stackrel{}{\rightarrow} V$. A map 
\(\namedright{X}{f}{Y}\)  
is $m$-connected if $f$ induces an isomorphism on $\pi_{k}$ for $k<m$ and an epimorphism on $\pi_{m}$.

\begin{lemma}\label{FPconlemma}
If $V$ and $\partial V$ are path-connected and nilpotent, and the inclusion 
\(\namedright{\partial V}{}{V}\) 
is $1$-connected, then~$F$ is path-connected, $P\simeq \Sigma F$ is simply-connected, and both $DV$ 
and $M$ are path-connected and nilpotent. 
\end{lemma}

\begin{proof}
Since 
\(\namedright{\partial V}{}{V}\) 
is $1$-connected, it induces an isomorphism on $\pi_{0}$ and an epimorphism on~$\pi_{1}$. 
The long exact sequence of homotopy groups for the homotopy fibration 
$F\stackrel{}{\longrightarrow}\partial V\stackrel{\iota}{\longrightarrow} V$ 
then implies that $F$ is path-connected. By Lemma \ref{DVdeclemma}, $P\simeq \Sigma F$, 
implying that~$P$ is simply-connected. 

The homotopy fibration 
$P\rightarrow DV\stackrel{p}{\rightarrow} V$ 
then implies that $DV$ is path-connected. As in the proof of Lemma~\ref{DVdeclemma}, there 
is a homotopy pushout 
\[\diagram 
     \partial V\rto^-{\iota}\dto^{\iota} & V\dto \\ 
     V\rto & DV. 
  \enddiagram\] 
In general, Rao~\cite[Theorem 2.1]{Rao} shows that if $W$ is the homotopy pushout of maps 
\(\namedright{X}{f}{Y}\) 
and 
\(\namedright{X}{g}{Z}\) 
where $X$, $Y$ and $Z$ are all nilpotent and $f$ and $g$ induce epimorphisms in $\pi_{1}$, 
then $W$ is nilpotent. In our case, both $\partial V$ and $V$ are path-connected and nilpotent 
and $\iota$ induces an epimorphism on $\pi_{1}$, so $DV$ is nilpotent. 

By Lemma \ref{fibrei=plemma} there is a homotopy fibration 
$P\stackrel{}{\longrightarrow}DV\stackrel{i}{\longrightarrow} V\times I$. 
As $P\simeq \Sigma F$ is simply-connected, the long exact 
sequence of homotopy groups for the fibration implies that $DV\stackrel{i}{\longrightarrow} V\times I$ 
is $2$-connected. In the pushout (\ref{Mpushdiag}), $V\times I$ is path-connected and nilpotent 
by hypothesis, we have seen that $DV$ is path-connected and nilpotent, and as $i$ is $2$-connected 
both $i$ and $i\circ e(h)$ induce epimorphisms in $\pi_1$. Therefore, by~\cite[Theorem 2.1]{Rao}, 
the pushout $M$ is path-connected and nilpotent. 
\end{proof}

We can now prove Theorem \ref{Mdecthm}.

\begin{proof}[Proof of Theorem \ref{Mdecthm}] 
The plan is to construct a rational homotopy fibration 
\(\nameddright{\Omega V}{\tau}{\Sigma P}{}{M}\) 
where $\tau$ is null homotopic, and use Lemma~\ref{FPconlemma} to identify $P$ as $\Sigma F$. 
To do this the first cube theorem (Theorem~\ref{1cubthm}) will be used to produce a  map 
\(\namedright{\Sigma P}{}{M}\) 
with the right homotopy fibre. 

To begin, we compare the homotopy fibres of the maps 
\(\namedright{DV}{i}{V\times I}\) 
and 
\(\lnamedright{DV}{i\circ e(h)}{V\times I}\). 
By Lemma~\ref{fibrei=plemma}, there is a homotopy fibration 
\(\nameddright{P}{\widetilde{g}}{DV}{i}{V\times I}\) 
where $\widetilde{g}$ is a name for the map from the fibre to the total space. Let 
$P^\prime$ is the homotopy fibre of $i\circ e(h): DV\rightarrow V\times I$. Then there is 
a homotopy fibration diagram
\begin{equation}
\label{DVehdiag}
\begin{gathered}
\xymatrix{
P^\prime \ar[r]^{\widetilde{g}^\prime}  \ar[d]^{e^\prime} &
DV\ar[d]^{e(h)} \ar[r]^{i\circ e(h)} &
V\times I \ar@{=}[d]\\
P\ar[r]^{\widetilde{g}} &
DV\ar[r]^{i} &
V\times I,
}
\end{gathered}
\end{equation}
where $e^\prime$ is an induced map of fibres. Since $e(h)$ is a diffeomorphism it induces an isomorphism of homotopy groups. The map of long exact sequences of homotopy groups induced by the map of fibrations~(\ref{DVehdiag}) and the five lemma therefore imply that $e^\prime_\ast: \pi_\ast(P^\prime)\rightarrow \pi_\ast(P)$ is an isomorphism. 
In particular, as $P$ is simply connected by Lemma~\ref{FPconlemma}, $P^\prime$ is simply-connected. 

Since $V$ is nilpotent by hypothesis, $P$ and $P'$ are simply-connected, and $DV$ is nilpotent by Lemma~\ref{FPconlemma}, we can consider 
the rationalization of Diagram (\ref{DVehdiag}).
Note the isomorphism $e^\prime_\ast$ also implies that for each $i\in \mathbb{Z}^+$,
\begin{equation}\label{dimP=P'eq}
{\rm dim}(\pi_i(P^\prime)\otimes \mathbb{Q})={\rm dim}(\pi_i(P)\otimes \mathbb{Q}).
\end{equation}
Moreover, since $V$ is compact it is of finite type, and therefore so are $DV$ and $V\times I$. This implies that both sides of (\ref{dimP=P'eq}) are finite. 

By assumption, $i_{\ast}\circ e(h)_\ast=i_{\ast}$ on rational homotopy groups. Therefore the composite 
\begin{equation}\label{iehgeq}
\pi_\ast(P)\otimes \mathbb{Q}\stackrel{\widetilde{g}_\ast}{\llarrow} \pi_\ast(DV)\otimes \mathbb{Q}  \stackrel{(i\circ e(h))_{\ast}}{\llarrow} \pi_\ast(V\times I)\otimes \mathbb{Q}
\end{equation}
is trivial as $(i\circ e(h))_{\ast}\circ \widetilde{g}_\ast=i_\ast \circ \widetilde{g}_\ast$ and $i\circ \widetilde{g}$ is null homotopic.
By Lemma \ref{FPconlemma}, $P\simeq \Sigma F$, so $P$ is a wedge of simply-connected spheres rationally.  
Thus the triviality of (\ref{iehgeq}) implies that the composite $(i\circ e(h))\circ \widetilde{g}$ is rationally null homotopic. Hence, rationally, there is a lift $e''$ of $\widetilde{g}$ to the homotopy fibre of $i\circ e(h)$, giving a homotopy commutative diagram 
\begin{equation}
\label{QPP'diag}
\xymatrixcolsep{3pc}
\xymatrixrowsep{2.5pc}
\begin{gathered}
\xymatrix{
& P \ar[d]^{\widetilde{g}} \ar[dl]_{e^{\prime\prime}} \\ 
P^\prime\ar[r]^{\widetilde{g}^\prime} &
DV. 
}
\end{gathered}
\end{equation} 

Consider again the homotopy fibration 
\(\nameddright{P}{\widetilde{g}}{DV}{i}{V\times I}\). 
The map $i$ has a right inverse up to homotopy. This is because the composite 
\(\nameddright{V}{\jmath_{1}}{DV}{i}{V\times I}\) 
is the inclusion into the base of the cylinder, where $\jmath_{1}$ is the inclusion into the base of $DV$, and this composite of inclusions is a homotopy equivalence since $I$ is contractible. Thus $\widetilde{g}_\ast$ is injective on homotopy groups. The homotopy commutativity of~(\ref{QPP'diag}) therefore implies that $e''$ is injective on rational homotopy groups. The equality in~(\ref{dimP=P'eq}) and the fact that both sides of that equality are finite then imply that $e^{\prime\prime}_\ast$ is an isomorphism on rational homotopy groups. It follows by Whitehead's Theorem that $e^{\prime\prime}$ is a rational homotopy equivalence. Hence, rationally, there is a homotopy fibration 
\(\llnameddright{P}{\widetilde{g}}{DV}{i\circ e(h)}{V\times I}\).

The nilpotence conditions in Lemma \ref{FPconlemma} imply that we may rationalize all spaces and maps to consider the cube 
\begin{gather}
\begin{aligned}
\xymatrixcolsep{1.5pc}
\xymatrixrowsep{1.5pc}
\xymatrix{
 & P \ar[dl]  \ar[rr]  \ar[dd]^<<<<{\widetilde{g}}|!{[d];[d]}\hole && 
 CP \ar[dl]  \ar[dd]^{}  \\
 CP \ar[dd]^{}  \ar[rr]  &&
Q \ar[dd]^<<<<{\widehat{g}} \\
  & DV \ar[dl]_{i}  \ar[rr]^<<<<{i\circ e(h)}|!{[r];[r]}\hole  && 
 V\times I \ar[dl]_{j_2}    \\
V\times I   \ar[rr]^{j_1}  &&
M 
}
\end{aligned}
\label{cubePdiag}
\end{gather}
where $CP$ is the rationalization of the (reduced) cone on $P$ and both $Q$ and $\widehat{g}$ will be defined 
momentarily. Since there are rational homotopy fibrations 
\(\nameddright{P}{\widetilde{g}}{DV}{i}{V\times I}\) 
and 
\(\lnameddright{P}{\widetilde{g}}{DV}{i\circ e(h)}{V\times I}\), 
the composites $i\circ\widetilde{g}$ and $(i\circ e(h))\circ\widetilde{g}$ are null homotopic. 
The (pointed) homotopy 
\(\namedright{P\times I}{}{V\times I}\) 
which at time $0$ is $i\circ\widetilde{g}$ and time $1$ is the constant map 
implies that there is a map 
\(\namedright{CP}{}{V\times I}\) 
which makes the left face of~(\ref{cubePdiag}) strictly commute. Similarly, there is a map 
\(\namedright{CP}{}{V\times I}\) 
which makes the rear face of~(\ref{cubePdiag}) strictly commute. Note the two maps from $CP$ 
to $V\times I$ may be different if the homotopies are different. 
%The bottom face of~(\ref{cubePdiag}) strictly commutes since it is the rationalization of a strictly commuting pushout. 
The bottom face of~(\ref{cubePdiag}) will strictly commute once we replace $M$ by the point-set pushout of the rational maps $i$ and $i\circ e(h)$, which is rationally homotopy equivalent to $M$. Hence for convenience we may assume the bottom face of~(\ref{cubePdiag}) is a point-set pushout. 
Define $Q$ as 
the point-set pushout of 
\(\namedright{P}{}{CP}\) 
with itself, so the top face strictly commutes. The strict commutativity of the left face, rear face and 
bottom face, and the fact that $Q$ is a point-set pushout, implies that there is a pushout map 
\(\namedright{Q}{\widehat{g}}{M}\) 
that makes the front and right faces of~(\ref{cubePdiag}) strictly commute. Therefore the 
cube~(\ref{cubePdiag}) strictly commutes, the bottom and top faces are pushouts, and the 
left and rear faces are homotopy pullbacks since $CP$ is contractible and there are rational homotopy fibrations 
\(\nameddright{P}{\widetilde{g}}{DV}{i}{V\times I}\) 
and 
\(\lnameddright{P}{\widetilde{g}}{DV}{i\circ e(h)}{V\times I}\). 
Hence, by Theorem~\ref{1cubthm}, the front and right faces in~(\ref{cubePdiag}) are also 
homotopy pullbacks. 

Finally, we draw consequences. Observe that as $CP$ is contractible, the pushout in 
the top face of~(\ref{cubePdiag}) implies that $Q\simeq\Sigma P$. Since the right face 
is a homotopy pullback, we obtain a diagram of rational homotopy fibrations 
 \[
\begin{gathered}
\xymatrix{
\Omega V \ar@{=}[d] \ar[r]  & 
\ast \ar[d] \ar[r]^{} &
V \times I\ar[d]^{j_1} \\
\Omega V \ar[r]^{\tau} &
\Sigma P \ar[r]^{\widehat{g}}&
M,
}
\end{gathered}
\]
implying that $\tau$ is null homotopic. Thus there is a rational homotopy equivalence $\Omega M\simeq_{\mathbb{Q}} \Omega V\times\Omega\Sigma P$, and Lemma \ref{DVdeclemma} then refines this 
to a rational homotopy equivalence  
$\Omega M \simeq_\mathbb{Q}\Omega V \times \Omega \Sigma^2 F$. 
\end{proof}

%%%%%%%%%%%%%%%%%%%%%%%%%%%%%%%%%%%%%%%%%%%%%%%%%%%%%%%%%%%%%%%%%%%%%%%

\section{The proof of Theorem \ref{openbookthm}}
\label{sec: proofthm1}

In this section, we prove Theorem \ref{openbookthm}.

\begin{proof}[Proof of Theorem \ref{openbookthm}]
We first prove case (a) when $i_{\ast}\circ e(h)_\ast: \pi_\ast(DV)\otimes \mathbb{Q}\rightarrow \pi_\ast(DV)\otimes \mathbb{Q}$ equals~$i_{\ast}$. 
By Theorem \ref{Mdecthm} there is a rational homotopy equivalence $\Omega M\simeq_{\mathbb{Q}}\Omega V\times\Omega \Sigma^2 F$, which implies that 
\[
\pi_\ast(M)\otimes \mathbb{Q}\cong (\pi_\ast(V)\otimes \mathbb{Q})\oplus (\pi_\ast(\Sigma^2 F)\otimes \mathbb{Q}).
\] 
Recall that $F$ is the homotopy fibre of the inclusion $\iota: \partial V\stackrel{}{\rightarrow} V$. 
By assumption, $V$ and $\partial V$ are nilpotent and the inclusion of the boundary 
\(\namedright{\partial V}{}{V}\) 
is $1$-connected. Thus $F$ is path-connected by Lemma \ref{FPconlemma}, and then~\cite[Chapter II, Proposition~2.13]{HMR} implies that $F$ is also nilpotent. 
Since $V$ is a smooth compact $(n-1)$-manifold, we have $H^{n-1}(V,\partial V;\mathbb{Z})\cong \mathbb{Z}$. This implies that $\iota$ is not a rational homotopy equivalence. In particular, $F$ is not rationally contractible. 
Therefore $\Sigma^2 F$ is not rationally contractible, in which case it is rationally a wedge of spheres. 
This implies that $\Sigma^{2} F$ is rationally hyperbolic unless it is a single sphere, in which case $F\simeq_{\mathbb{Q}} S^l$ for some $l$.
It follows that~$M$ is rationally elliptic if and only if $V$ is rationally elliptic and $F\simeq_{\mathbb{Q}} S^l$ for some $l$. In this case 
\[
\pi_\ast(M)\otimes \mathbb{Q}\cong (\pi_\ast(V)\otimes \mathbb{Q})\oplus (\pi_\ast(S^{l+2})\otimes \mathbb{Q}).
\] 
Otherwise $M$ is rationally hyperbolic, and this happens if and only if either $V$ is rationally hyperbolic or $F\not\simeq_{\mathbb{Q}} S^l$ for any $l$. This proves the theorem in the case (a) when $i_{\ast}\circ e(h)_\ast=i_{\ast}$. 

Next, we prove case (b). 
By assumption, there is an $m\in\mathbb{Z}^+$ such that $i_{\ast}\circ e(h)_\ast^m=i_{\ast}$. Notice that, by definition of $e(h)$, we have $e(h)^m=e(h^m)$. Therefore $i_{\ast}\circ e(h^m)_\ast=i_{\ast}$, so by the special case the theorem holds for the open book 
\[
M^\prime\cong (\partial V\times D^2)\cup_{\rm id} V_{h^m}.
\]
On the other hand, by definition of $V_{h}$ as a quotient space there is a quotient map
\[
q_h: V\times I \longrightarrow V_h
\]
such that $q_h(x, 1)=q_h(h(x),0)$. Denote $q_h(x, t)$ by $[x,t]$ for simplicity.
Define a map
\[
\widetilde{c}_m: V\times I\longrightarrow V_h,
\]
by $\widetilde{c}_m(x, \frac{i}{m}+t)= [h^i(x), mt]$ for any $0\leq i \leq m-1$ and $t\in [0,\frac{1}{m}]$. Since 
\[
\widetilde{c}_m(x, \frac{i}{m}+\frac{1}{m})=[h^i(x), 1]=[h^{i+1}(x), 0]=\widetilde{c}_m(x, \frac{i+1}{m}+0),
\]
$\widetilde{c}_m$ is well-defined and continuous. Further, since 
\[
\widetilde{c}_m(x, 1)=\widetilde{c}_m(x, \frac{m-1}{m}+\frac{1}{m})=[h^{m-1}(x),1]=[h^m(x), 0]=\widetilde{c}_m(h^m(x), 0),
\]
we see that $\widetilde{c}_m$ factors through $q_{h^m}$ to define a map
\[
c_m: V_{h^m}\longrightarrow V_h
\]
such that $c_m\circ q_{h^m}=\widetilde{c}_m$. By its construction, $c_m$ covers the standard $m$-sheeted covering $t_m: S^1\rightarrow S^1$ defined by $t_m(z)=z^m$ for any $z\in \mathbb{C}$, and by (\ref{Vhbundleeq}) there is a morphism of fibre bundles
 \begin{equation}
\begin{gathered} 
\label{cmtm} 
\xymatrix{
V\ar@{=}[d] \ar[r] &
V_{h^m} \ar[d]^{c_m} \ar[r]^{} &
S^1 \ar[d]^{t_m} \\
V \ar[r] &
V_{h} \ar[r] &
S^1.
}
\end{gathered}
\end{equation}
Further, since $h$ restricts to the identity map on $\partial V$, the $m$-sheeted covering $c_{m}$ restricts to $\rm{id}\times t_{m}$ on $\partial V\times S^{1}$. 

In order to apply rational homotopy theory to Diagram (\ref{cmtm}), we need to prove that the manifolds $V_h$ and $V_{h^m}$ are nilpotent spaces.
Indeed, for the fibre bundle (\ref{Vhbundleeq}) of the mapping torus $V_h$, the canonical action of $\pi_1(S^1)\cong \mathbb{Z}$ on the homotopy groups $\pi_\ast(V)$ of the fibre is determined by its restriction on a generator of $\pi_1(S^1)$, which is the monodromy action $h_\ast: \pi_\ast(V)\stackrel{}{\longrightarrow}\pi_\ast(V)$. By assumption in case (b), this action is nilpotent.
Moreover, the fibre bundle implies that $\pi_1(V_h)\cong \pi_1(S^1)\cong\mathbb{Z}$, $\pi_i(V_h)\cong \pi_i(V)$ for any $i\geq 2$, and up to homotopy equivalence $V$ is the universal covering of $V_h$. It follows that the action of the fundamental group $\pi_1(V_h)$ on the higher homotopy groups $\pi_i(V_h)$, $i\geq 2$ can be identified with the action of $\pi_1(S^1)$ on the homotopy groups $\pi_i(V)$, which is nilpotent by the previous discussion. Hence, the manifold $V_h$ is a nilpotent space. This implies that the $m$-sheeted covering $V_{h^m}$ is also a nilpotent space by \cite[Theorem 1.3]{Mis}. In particular, rationalizations of $V_h$ and $V_{h^m}$ exist. 

Now we draw consequences. First, since the map $t_{m}$ induces an isomorphism on rational homotopy groups, the five-lemma applied to the morphism of fibre bundles~(\ref{cmtm}) implies that $c_{m}$ also induces an isomorphism on rational homotopy groups. Whitehead's Theorem therefore implies that $c_{m}$ induces a rational homotopy equivalence. Second, the fact that $c_{m}$ restricts to ${\rm id}\times t_{m}$ on $\partial V\times S^{1}$ implies that there is a map 
\[
\Phi: (\partial V\times D^2)\cup_{\rm id} V_{h^m} \stackrel{}{\longrightarrow} (\partial V\times D^2)\cup_{\rm id} V_{h} 
\]
such that $\Phi$ restricts to $c_m$ on $V_{h^m}$ and to $({\rm id}\times t_m)$ on $(\partial V\times D^2)$. 
As all spaces involved are nilpotent, it follows that $\Phi$ is also a rational homotopy equivalence and $M\simeq_{\mathbb{Q}} M^\prime$. Therefore, as the theorem holds for $M'$, it also holds for $M$. This completes the proof of the theorem.
\end{proof}

\section{The necessity of the homotopy conditions on the monodromy} 
\label{sec:necessity} 

In this section we investigate the homotopy conditions on the monodromy in Theorem \ref{openbookthm} further. First is a comparison with work of Grove and Halperin and second is an application to Milnor's open books. 

In \cite{GH}, Grove and Halperin showed that if there are maps $\phi_i: X\rightarrow B_i$ ($i=0$, $1$) whose homotopy fibre are rationally spheres then the {\it double mapping cylinder}
\[
DCyl(X):= B_0\cup_{\phi_0} (X \times I) \cup_{\phi_1} B_1
\] 
is rationally elliptic. In our case, the open book $M\cong (\partial V\times D^2)\cup_{\rm id} V_h$ is a double mapping cylinder via the homeomorphism 
\[
M\cong (\partial V\times D^2) \cup_{({\rm id}\times j)} (\partial V\times S^1 \times I) \cup_{J} V_h, 
\] 
where $j: S^1\hookrightarrow D^2$ is the canonical inclusion and $J: \partial V\times S^1\hookrightarrow V_h$ is the restriction of the mapping torus to the boundary $\partial V$.
It is clear that the homotopy fibre of $({\rm id}\times j)$ is $S^1$. The morphism of bundles 
\[ 
\xymatrix{ 
  \partial V\ar[r]\ar[d] & \partial V\times S^{1}\ar[r]\ar[d]^{J} & S^{1}\ar@{=}[d] \\ 
  V\ar[r] & V_{h}\ar[r]^{\pi} & S^{1} 
} 
\]
implies that the left square is a pullback, and therefore the homotopy fibre of $J$ is the same 
as that of the inclusion $\partial V\hookrightarrow V$. When the latter fibre is rationally a sphere, Grove and Halperin \cite[Corollary 6.1]{GH} showed that $M$ is rationally elliptic if and only if $\partial V$ is. 
In particular, this is consistent with part (1) of Theorem \ref{openbookthm}. The classification result in Theorem~\ref{openbookthm} can therefore be thought of extending Grove and Halperin's work in the elliptic case. 

%The following is a simple example of Theorem \ref{openbookthm} and \ref{zMdecthm}. 
\begin{example}\label{triobsm+1ex}
Let $N\times S^{m+1}$ be the product of a closed manifold $N$ and the $(m+1)$-sphere with $m\geq 2$. Since $S^{m+1}\cong (S^{m-1}\times D^2)\cup_{(S^{m-1}\times S^1)} (D^m\times S^1)$, the product manifold $N\times S^{m+1}$ admits a canonical open book decomposition
\[
N\times S^{m+1}\cong (N\times S^{m-1}\times D^2)\cup_{{\rm id}} (N\times D^m\times S^1)
\]
with page $N\times D^m$ and trivial monodromy. Note that the homotopy fibre of the inclusion $N\times S^{m-1}\hookrightarrow N\times D^m$ is $S^{m-1}$. Then the conclusion of Theorem \ref{zMdecthm} reduces to the obvious homotopy equivalence
\[
\Omega(N\times S^{m+1})\simeq  \Omega (N\times D^m)\times \Omega \Sigma^2 S^{m-1}. 
\] 
Further, the conclusion of Theorem \ref{openbookthm} reduces to stating that $N\times S^{m+1}$ is rationally elliptic if and only if $N\times D^m$ is, which automatically holds as $S^{m+1}$ is rationally elliptic.
\end{example}

\begin{example}
Following Example \ref{triobsm+1ex}, we can try to construct examples with nontrivial monodromies, for which we need to choose nontrivial homotopy classes of elements in the diffeomorphism group ${\rm Diff}_{\partial} D^{m}$, the group of self-diffeomorphisms of $D^m$ which fix a neighborhood of $\partial D^m$ pointwise. It is known that ${\rm Diff}_{\partial} D^{m}$ is contractible for $m=1$, $2$ and $3$: the case $m=1$ is easy, the case $m=2$ was proved by Smale \cite{S}, and the case $m=3$, known as the Smale conjecture, was proved by Hatcher \cite{Hat}. In particular, when $m=2$ or $3$ we cannot construct an open book with page $D^m$ and nontrivial monodromy. 

However, for $m\geq 4$ there could be such constructions. Indeed, for each $m\geq 5$ Cerf \cite{Ce} proved that
\[
\pi_0({\rm Diff}_{\partial} D^{m})\cong \Theta_{m+1}
\]
where $\Theta_{m+1}$ denotes the group of oriented homotopy $(m + 1)$-spheres up to
oriented diffeomorphism. As Kervaire and Milnor \cite{KM} showed that $\Theta_{m+1}$ is always finite, the monodromy $h$ of the open book construction
\[
\mathcal{S}_h:=(S^{m-1}\times D^2)\cup_{\rm id} D^m_h
\]
is of finite order up to isotopy. Since the page $D^m$ is contractible, the induced homomorphism $i_\ast: \pi_\ast(DV)\longrightarrow \pi_\ast(V\times I)$ is trivial, and the homotopy fibre of the inclusion $S^{m-1}\hookrightarrow D^m$ is $S^{m-1}$, Theorem \ref{openbookthm} applies to show that $\mathcal{S}_h$ is rationally elliptic and 
\[
\pi_\ast(\mathcal{S}_h)\otimes \mathbb{Q}\cong \pi_\ast(S^{m+1})\otimes \mathbb{Q}.
\]
In particular, $\mathcal{S}_h$ is rationally a sphere.

For $m=4$, the famous recent work of Watanabe \cite{Wa} disproved the conjecture that ${\rm Diff}_{\partial} D^{4}$ is contractible. However, as in \cite[Remark 1.2]{Wa}, information about $\pi_0({\rm Diff}_{\partial} D^{4})$ is still unknown. Nevertheless, as above, any class in $\pi_0({\rm Diff}_{\partial} D^{4})$ will still give an open book structure on a corresponding rational sphere.
\end{example}

Theorem \ref{openbookthm} also has an interesting application in the hyperbolic case. Consider Milnor's open book decompositions of spheres \cite{Mil}. Let $f: \mathbb{C}^{n+1}\rightarrow \mathbb{C}$ be a non-constant complex polynomial in $z=(z_1,\ldots, z_{n+1})$, and $\mathcal{Z}=f^{-1}(0)$ be the algebraic set of zeros. Suppose $z_{0}\in \mathcal{Z}$ is an isolated critical point of $f$ and there is a diffeomorphism $S_{\epsilon}\cong S^{2n+1}$, where $S_{\epsilon}$ is a sphere of radius $\epsilon$ centered at $z_{0}$ for some sufficiently small $\epsilon$. Then there is a compact $2n$-manifold $V$ with boundary and an abstract open book decomposition of $S^{2n+1}$ via diffeomorphisms 
\[
S^{2n+1}\cong S_{\epsilon}\cong (\partial V\times D^2)\cup_{\rm id} V_h,
\]
where the binding is $\partial V\cong \mathcal{Z}\cap S_{\epsilon}$ and the interior $(V-\partial V)$ of the page $V$ is diffeomorphic to the fibre of the {\it Milnor fibration}
\[
\phi: S_{\epsilon}-\mathcal{Z}\longrightarrow S^1
\] 
defined by $\phi(z)=\frac{f(z)}{||f(z)||}$. Milnor showed that there is a homotopy equivalence 
\begin{equation}\label{VmuSneq}
V\simeq \bigvee_{i=1}^{\mu} S^{n},
\end{equation}
where the positive integer $\mu$ is the multiplicity of $z_0$ as solution to the system of polynomial equations $\{\partial f/ \partial z_j=0\}_{j=1}^{n+1}$.
\begin{proposition}\label{milcoro}
Let $h$ be the monodromy of the open book $S^{2n+1}$ determined by an isolated critical point $z_0$ of a complex polynomial $f: \mathbb{C}^{n+1}\rightarrow \mathbb{C}$ with $n\geq 3$. If the multiplicity of $z_0$ as solution to the system of polynomial equations $\{\partial f/ \partial z_j=0\}_{j=1}^{n+1}$ is greater than $1$, then either:
\begin{itemize}
\item there is no integer $m$ such that $h^m$ is rationally homotopic to the identity map, or 
\item the monodromy $h$ acts non-nilpotently on the homology groups $H_{\ast}(V;\mathbb{Z})$.
\end{itemize}

\end{proposition}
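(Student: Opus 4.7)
The plan is to argue by contraposition using Theorem~\ref{openbookthm}. Suppose both alternatives of the proposition fail, so there exists a positive integer $m$ with $h^m\simeq_{\mathbb{Q}}{\rm id}$ and $h_\ast$ acting nilpotently on $H_\ast(V;\mathbb{Z})$. The first observation is that, because $h$ is a diffeomorphism, $h_\ast$ is an \emph{automorphism} of $H_n(V;\mathbb{Z})\cong\mathbb{Z}^\mu$, so the nilpotence hypothesis forces $h_\ast-{\rm id}$ (rather than $h_\ast$ itself) to be nilpotent; equivalently, $h_\ast$ is unipotent on $H_n(V;\mathbb{Q})$. Unipotence together with $h_\ast^m={\rm id}$ on $H_n(V;\mathbb{Q})$ collapses to $h_\ast={\rm id}$ on $H_n(V;\mathbb{Q})$, and by rational formality of a wedge of spheres this lifts to $h\simeq_{\mathbb{Q}}{\rm id}$ as self-maps of $V$. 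This reduction is what lets us aim at the cleaner hypothesis~(a) of Theorem~\ref{openbookthm} rather than~(b).

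Next I would upgrade $h\simeq_{\mathbb{Q}}{\rm id}$ to $e(h)_\ast={\rm id}$ on $\pi_\ast(DV)\otimes\mathbb{Q}$. Lemma~\ref{DVdeclemma} provides the splitting $\pi_\ast(DV)\otimes\mathbb{Q}\cong(\pi_\ast(V)\otimes\mathbb{Q})\oplus(\pi_\ast(\Sigma F)\otimes\mathbb{Q})$, and $e(h)$ is the identity on the subspace $V\times\{0\}\cup(\partial V\times I)\subset DV$ that carries the $\Sigma F$-summand, while on $V\times\{1\}$ it restricts to $h$. Tracking this through the splitting should give the required triviality on rational homotopy. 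The main technical obstacle I foresee is ensuring that the rational homotopy $h\simeq_{\mathbb{Q}}{\rm id}$ can be taken \emph{rel $\partial V$}, so that $e(h)\simeq_{\mathbb{Q}}{\rm id}_{DV}$ genuinely holds; both $h$ and ${\rm id}$ already agree on $\partial V$, so relative rational formality of $(V,\partial V)$ or a direct fibration-sequence argument using $p\colon DV\to V$ should suffice.

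With the hypothesis of Theorem~\ref{openbookthm}(a) in place, I would apply the theorem to the Milnor open book $S^{2n+1}\cong(\partial V\times D^2)\cup_{\rm id}V_h$. Since $M=S^{2n+1}$ is rationally elliptic, conclusion~(1) of the theorem forces the page $V$ to be rationally elliptic. However, $V\simeq\bigvee_{i=1}^{\mu}S^n$ with $\mu\ge 2$ and $n\ge 3$ has $\pi_\ast(\Omega V)\otimes\mathbb{Q}$ isomorphic to the free graded Lie algebra on $\mu\ge 2$ generators in degree $n-1$, so $\pi_\ast(V)\otimes\mathbb{Q}$ grows exponentially and $V$ is rationally hyperbolic. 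This contradiction closes the argument. If the \emph{rel $\partial V$} upgrade in the second paragraph turns out to be awkward, the fallback plan is to use Theorem~\ref{openbookthm}(b) instead: the Hurewicz isomorphism transfers unipotence to $\pi_n(V)$, and Hilton's splitting of $\pi_\ast(\bigvee S^n)$ into summands indexed by basic products (or, rationally, the free graded Lie algebra structure) propagates this to a unipotent, hence nilpotent, action on each $\pi_i(V)$, yielding the same contradiction by the same route.
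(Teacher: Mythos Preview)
Your fallback plan via condition~(b) is correct and is essentially the route the paper takes, though the paper reaches the nilpotent action on $\pi_*(V)$ by a shorter argument. Rather than transferring unipotence from $H_n(V)$ to $\pi_n(V)$ via Hurewicz and then propagating it through Hilton--Milnor or the free graded Lie algebra, the paper observes that a nilpotent action of $h_*$ on $H_*(V;\mathbb{Z})$ is precisely the statement that $\pi_1(V_h)\cong\mathbb{Z}$ acts nilpotently on the homology of the universal cover $V$; by the Hilton--Mislin--Roitberg characterisation this makes $V_h$ a nilpotent space, and hence $h$ automatically acts nilpotently on $\pi_*(V)$. This sidesteps any computation with basic products. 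The remaining steps---$\partial V$ is simply connected for $n\geq 3$ by Milnor, $V\simeq\bigvee^\mu S^n$ is rationally hyperbolic for $\mu\geq 2$, and Theorem~\ref{openbookthm} then forces $S^{2n+1}$ to be rationally hyperbolic, a contradiction---are the same as yours.

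Your primary plan via condition~(a) is more ambitious, but the gap you flag is real and your description of the splitting is not quite right. The decomposition $\pi_*(DV)\otimes\mathbb{Q}\cong(\pi_*(V)\otimes\mathbb{Q})\oplus(\pi_*(\Sigma F)\otimes\mathbb{Q})$ from Lemma~\ref{DVdeclemma} is induced by the folding map $p\colon DV\to V$, and $e(h)$ does \emph{not} commute with $p$: one has $p\circ e(h)(v,1)=h(v)\neq v=p(v,1)$. So the $\Sigma F$-summand, which is $\ker p_*$, need not be $e(h)_*$-invariant, and it is not the case that ``$V\times\{0\}\cup(\partial V\times I)$ carries the $\Sigma F$-summand''. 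Deducing $e(h)_*=\mathrm{id}$ on $\pi_*(DV)\otimes\mathbb{Q}$ from $h\simeq_{\mathbb{Q}}\mathrm{id}$ really does seem to require the rel-$\partial V$ upgrade (or an ad hoc argument specific to the Milnor page), and the paper does not attempt this reduction: it goes straight to~(b). Your unipotent-plus-finite-order-implies-identity observation is correct and pleasant, but it is not needed for the argument.
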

\begin{proof}
To obtain a contradiction assume that $h^m$ is rationally homotopic to the identity map and the monodromy $h$ acts nilpotently on the homology groups $H_{\ast}(V;\mathbb{Z})$. By a similar argument to that in the proof of Theorem \ref{openbookthm}, the latter condition means that $\pi_1(V_h)$ acts nilpotently on the homology groups $H_\ast(V)$ of the universal cover. It follows that $V_h$ is a nilpotent space by \cite[Chapter II, Remark~2.19]{HMR}, which implies that $h$ acts nilpotently on the homotopy groups $\pi_\ast(V)$.
Further, as $n\geq 3$, the binding $\partial V$ is simply-connected by \cite[Proposition 5.2]{Mil}.
For the page, by (\ref{VmuSneq}) $V\simeq\bigvee_{i=1}^{\mu} S^n$ and it is rationally hyperbolic as $\mu\geq 2$ by hypothesis. Therefore Theorem \ref{openbookthm} can be applied and it implies that the associated open book $S^{2n+1}=(\partial V\times D^{2})\cup_{{\rm id}} V_{h}$ is rationally hyperbolic. This contradicts the fact that spheres are rationally elliptic, and hence the proposition follows. 
\end{proof}

Notice too that for the open book in Proposition \ref{milcoro} with $\mu \geq 2$, the homotopy fibre of $\partial V\hookrightarrow V$ cannot be a sphere rationally. Otherwise, by the result of Grove and Halperin \cite[Corollary 6.1]{GH} $\partial V$ is rationally elliptic as $S^{2n+1}$ is, and then so is $V$. However, this contradicts (\ref{VmuSneq}) when $\mu \geq 2$.

We end this section with two examples.
\begin{example}\label{Beg}
Consider the Brieskorn variety given by the complex polynomial
\[
f(z_1,\ldots, z_{n+1})=(z_1)^{a_1}+\cdots +(z_n)^{a_n}+(z_{n+1})^{a_{n+1}}
\]
with $n\geq 3$ and $a_1, \ldots, a_{n+1}\geq 2$.
The origin $z_0=0$ is the only critical point of $f$.
Brieskorn and Pham \cite[Theorem 9.1]{Mil} showed that the multiplicity of $0$ is
\[
\mu= \mathop{\prod}\limits_{j=1}^{n+1} (a_i-1).
\]
Hence, for the associated open book decomposition of $S^{2n+1}$, when ${\rm max}(a_1,\ldots, a_{n+1})\geq 3$, by Proposition \ref{milcoro} either the monodromy $h$ is of infinite order, or $h$ acts non-nilpotently on the homology groups $H_{\ast}(V;\mathbb{Z})$. 
\end{example}

\begin{example}\label{Keg}
Suppose that $n$ is even and each $a_i=2$, $1\leq i\leq n+1$ in Example \ref{Beg}. 
Then for the associated open book decomposition of $S^{2n+1}$, by \cite[Example 2]{KK} the monodromy $h$ is of order at most $8$, and its page $V$ is diffeomorphic to the disk bundle of the tangent bundle of $S^n$. 

Consider the boundary connected sum $V\# V$ with its self diffeomorphism $h\# h$. It determines an open book diffeomorphic to $((\partial V\# \partial V)\times D^2)\cup_{\rm id} (V\# V)_{h\# h}$.
Recall for an open book of dimension $2n+1$ with page $X$ and monodromy $\varphi$, its {\it variation} of the monodromy
\[
{\rm Var}(\varphi): H_n (X, \partial X)\stackrel{}{\longrightarrow} H_n(X)
\]
is defined by ${\rm Var}(\varphi)[z]=[\varphi(z)-z]$ for any relative $n$-cycle $z$ of $(X, \partial X)$. A classical result of Kauffman \cite[Proposition 3.3]{Kau} shows that an open book is a homotopy sphere if and only its variation is an isomorphism. In particular, ${\rm Var}(h)$ is an isomorphism for the open book decomposition of $S^{2n+1}$ at the start of the example. Since $V\# V\simeq S^{n}\vee S^{n}$, it is easy to check that ${\rm Var}(h\# h)$ is also an isomorphism, and hence we obtain an open book decomposition of a homotopy sphere $\Sigma^{2n+1}$
\[
\Sigma^{2n+1}\cong ((\partial V\# \partial V)\times D^2)\cup_{\rm id} (V\# V)_{h\# h}.
\]

Note that its monodromy $h\# h$ is of order at most $8$ as $h$ is, its page $V\# V\simeq S^{n}\vee S^{n}$ is rationally hyperbolic, and $\Sigma^{2n+1}\simeq S^{2n+1}$ is rational elliptic. Therefore, Theorem \ref{openbookthm} implies that the monodromy $h\# h$ acts non-nilpotently on the homology groups $H_{\ast}(V\# V;\mathbb{Z})$. 
\end{example}

\appendix
\section{Mather's cube theorems}
\label{AppendixA}

In this appendix, we describe two cube theorems due to Mather \cite{M}. The statements 
are special cases of those in~\cite{M} to avoid technicalities with compatible homotopies, 
as will be explained. The first statement is from~\cite[Theorem 5.10.7]{MV}. 

\begin{theorem}[Mather's first cube theorem]\label{1cubthm}
Suppose that there is a strictly commutative diagram
\begin{gather}
\begin{aligned}
\xymatrixcolsep{1.5pc}
\xymatrixrowsep{1.5pc}
\xymatrix{
 & A^\prime \ar[dl]  \ar[rr]  \ar[dd]^<<<<{f_1}|!{[d];[d]}\hole && 
 B^\prime \ar[dl]  \ar[dd]^{f_2}  \\
 C^\prime \ar[dd]^{f_3}  \ar[rr]  &&
 D^\prime \ar[dd]^<<<<{f_4} \\
  & A \ar[dl]  \ar[rr]|!{[r];[r]}\hole  && 
 B\ar[dl]    \\
 C \ar[rr]  &&
 D 
}
\end{aligned}
\label{cubethmdiag}
\end{gather}
in which 
\begin{itemize}
\item the left and rear faces are homotopy pullbacks;
\item the top and bottom faces are homotopy pushouts; and
\item the map $f_4$ is the naturally induced map 
\begin{equation}\label{indf4eq}
{\rm ho colim}(C^\prime\leftarrow A^\prime \rightarrow B^\prime)\longrightarrow {\rm ho colim}(C\leftarrow A\rightarrow B).
\end{equation}
\end{itemize}
Then the front and the right faces are homotopy pullbacks.~$\qqed$  
\end{theorem}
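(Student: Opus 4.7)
By the symmetry of the cube that exchanges $(B,B')$ with $(C,C')$ — simultaneously swapping the hypothesised left and rear pullback faces with each other and swapping the desired front and right pullback faces with each other — it suffices to prove that the right face is a homotopy pullback; the front face then follows by the same argument with $B$ and $C$ interchanged. The plan is to strictify the diagram, form the homotopy pullback $B\times_D^h D'$ explicitly, and show that the canonical comparison map from $B'$ is a weak equivalence by a fibrewise computation.

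I would first replace the top and bottom homotopy pushouts by concrete double mapping cylinders: write $D = C\cup_A (A\times J)\cup_A B$ for the bottom span $C\leftarrow A\rightarrow B$, where $J=[-1,1]$ with the two ends attached via the span maps, and analogously for $D'$ using the top span. In this model both $B\hookrightarrow D$ and $C\hookrightarrow D$ are closed cofibrations. Using the homotopy commutativity of the cube together with the hypothesis that the rear and left faces are homotopy pullbacks, one may represent $f_4$ by a map that agrees with $f_2$ on $B'$, with $f_3$ on $C'$, and with $f_1\times \mathrm{id}_J$ on the cylinder piece $A'\times J$. With this choice, form the homotopy pullback $E := B\times_D^h D'$ together with the canonical map $\phi\colon B'\to E$ coming from the inclusion $B'\hookrightarrow D'$ and the map $f_2\colon B'\to B$; the goal is to show that $\phi$ is a weak equivalence.

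To establish this, I plan to compare homotopy fibres of $f_2$ and $f_4$ pointwise over each $b\in B$. For a point $b\in B$ lying off the gluing locus $g(A)$, the preimage of a small contractible neighbourhood of $b$ in $D'$ under $f_4$ lies entirely in $B'$, and the identification $\mathrm{hofib}_b(f_4)\simeq \mathrm{hofib}_b(f_2)$ is immediate from the strictification. For $b = g(a)$ on the gluing locus, a neighbourhood of $b$ in $D$ contains both part of $B$ and a slice of the cylinder $A\times J$ near $\{a\}\times\{1\}$; pulling back $f_4$ over this neighbourhood detects $\mathrm{hofib}_a(f_1)$ on the cylinder side and $\mathrm{hofib}_b(f_2)$ on the $B'$-side, and the hypothesis that the rear face is a homotopy pullback supplies a natural identification of these. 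The main obstacle is that this identification must be assembled coherently as $a$ varies along $A$ and across the gluing locus; once this coherence is handled, the fibrewise comparison is a weak equivalence at every $b\in B$, so Whitehead's theorem applied to the resulting map of fibrations over $B$ shows that $\phi$ is a weak equivalence. By the opening symmetry, the front face is then also a homotopy pullback.
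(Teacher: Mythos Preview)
The paper does not give its own proof of this statement: it is quoted in the appendix as a known result, with citation to \cite[Theorem~18]{M} and a pointer to \cite[Theorem~5.10.7]{MV} for the argument, the paper's only contribution being the clarification that $f_4$ must be the induced map~(\ref{indf4eq}). So there is nothing in the paper to compare your attempt against.

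Evaluating your sketch on its own terms: the overall strategy of strictifying to double mapping cylinders and checking that $\phi\colon B'\to B\times_D^h D'$ is a weak equivalence by comparing homotopy fibres over $B$ is reasonable and close to one standard line. The genuine gap is your local computation of homotopy fibres. In your strictified model, $f_4$ is \emph{not} a fibration, so the statement ``the preimage of a small contractible neighbourhood of $b$ under $f_4$ lies entirely in $B'$'' only controls the strict fibre, not $\mathrm{hofib}_b(f_4)$; the latter involves paths in all of $D$, which can run through $C$ regardless of where $b$ sits. Equivalently, although the right face in your model is a strict pullback, strict pullbacks are not homotopy pullbacks unless one leg is a fibration, and the closed cofibration $B\hookrightarrow D$ does not suffice. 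The fix is to replace $f_4$ by a fibration first (or, equivalently, to use the open cover $D=(D\setminus C)\cup(D\setminus B)$ together with the deformation retractions $D\setminus C\simeq B$, $D'\setminus C'\simeq B'$ compatible under $f_4$); only then does the fibrewise argument go through, and only then does the ``coherence'' obstacle you flag dissolve. Finally, the last step is not Whitehead's theorem: what you need is that a map over $B$ inducing weak equivalences on all homotopy fibres is itself a weak equivalence, which is the five lemma applied to the long exact sequences.
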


The original statement of the first cube theorem~\cite[Theorem 18]{M} assumes that the cube 
homotopy commutes and satisfies compatibilities among the homotopies. The hypothesis of strict 
commutativity for the cube in Theorem~\ref{1cubthm} lets one bypass the compatibilties. 
The second statement follows~\cite[Lemma 3.1]{PT}.

\begin{theorem}[Mather's second cube theorem]\label{2cubthm} 
Suppose that there is a homotopy pushout 
\[\diagram 
     A\rto\dto &  B\dto \\ 
     C\rto & D 
  \enddiagram\] 
and a homotopy fibration 
\(\nameddright{D'}{}{D}{h}{Z}\). 
Composing all the maps in the homotopy pushout with~$h$ and taking homotopy fibres over 
the common base $Z$ gives a homotopy commutative cube
\begin{gather}
\begin{aligned}
\xymatrixcolsep{1.5pc}
\xymatrixrowsep{1.5pc}
\xymatrix{
 & A^\prime \ar[dl]  \ar[rr]  \ar[dd]|!{[d];[d]}\hole && 
 B^\prime \ar[dl]  \ar[dd]  \\
 C^\prime \ar[dd]  \ar[rr]  &&
 D^\prime \ar[dd] \\
  & A \ar[dl]  \ar[rr]|!{[r];[r]}\hole  && 
 B\ar[dl]    \\
 C \ar[rr]  &&
 D 
 }
\end{aligned}
\end{gather}
that defines $A'$, $B'$ and $C'$, and for which the four sides are all homotopy pullbacks.  
Then the top face of the cube is a homotopy pushout.~$\qqed$ 
\end{theorem} 

The original statement of the second cube theorem~\cite[Theorem 25]{M} assumes that 
there is a cube that homotopy commutes in which the bottom face is a homotopy pushout, 
the four sides are homotopy pullbacks, and there are compatibilities among the homotopies, 
and concludes that the top face is a homotopy pushout. The hypothesis that the homotopy  
commutativity of the cube is due to it being induced by taking fibres over a map 
\(\namedright{D}{h}{Z}\)  
lets one bypass the compatibilities.

%%% The bibliography %%%
\bibliographystyle{amsalpha}

\begin{thebibliography}{FHT}

\bibitem[BB]{BB} Sa. Basu and So. Basu, Homotopy groups of highly connected manifolds, 
   \emph{Adv. Math.} 337 (2018), 363-416. 

\bibitem[BT1]{BT1} P. Beben and S. Theriault, The loop space homotopy type of 
   simply-connected four-manifolds and their generalizations, 
   \emph{Adv. Math.} \textbf{262} (2014), 213-238. 
   
\bibitem[BT2]{BT2} P. Beben and S. Theriault, Homotopy groups of highly 
   connected Poincar\'{e} Duality complexes, \emph{Doc. Math.} \textbf{27} (2022), 183-211.  
   
\bibitem[BW]{BW} P. Beben and J. Wu, The homotopy type of a Poincar\'{e} duality 
   complex after looping, \emph{Proc. Edinburgh Math. Soc.} 58 (2015), 581-616. 
   
\bibitem[BC]{BC} J. Bowden and D. Crowley, Contact open books with flexible pages, \emph{Bull. Lond. Math. Soc.} \textbf{55} (2023), 1302-1313.
   
\bibitem[Ce]{Ce} J. Cerf, La stratification naturelle des espaces de fonctions diff\'{e}rentiables r\'{e}elles et le th\'{e}or\`{e}me de la pseudo-isotopie, \emph{Inst. Hautes Etudes Sci. Publ. Math.} (1970), no. 39, 5-173.
%\bibitem[DK]{DK} A. Durfee and L. Kauffman, Periodicity of branched cyclic covers, \emph{Math. Ann.} \textbf{218} (1975), 157-174.   
   
\bibitem[FHT]{FHT} Y. F\'{e}lix, S. Halperin and J.- C. Thomas, Rational homotopy theory, \emph{Graduate Texts in Mathematics}, Vol. \textbf{205}, Springer-Verlag, New York, 2001.   

\bibitem[FOT]{FOT} Y. F\'{e}lix, J. Oprea and D. Tanr\'{e}, Algebraic models in geometry, \emph{Oxf. Grad. Texts Math.} \textbf{17}, Oxford University Press, Oxford, 2008. xxii+460 pp.

\bibitem[Ga]{Ga} T. Ganea, A generalization of the homology and homotopy suspension, 
   \emph{Comment. Math. Helv.} \textbf{39} (1965), 295-322. 

\bibitem[Gi]{Gi} E. Giroux, G\'{e}ometrie de contact: de la dimension trois vers les dimensions sup\'{e}rieures, 
\emph{Proceedings of the International Congress of Mathematicians}, Vol. II (Beijing, 2002), 405-414, Higher Ed. Press, Beijing, 2002. 

\bibitem[GL]{GL} S. Gitler and S. L\'{o}pez de Medrano, Intersections of quadrics, 
   moment-angle manifolds and connected sums, \emph{Geom. Topol.} \textbf{17} (2013), 
   1497-1534. 

\bibitem[GH]{GH} K. Grove and S. Halperin, Dupin hypersurfaces, group actions and the double mapping cylinder, 
\emph{J. Differential Geom.} Vol. \textbf{26}, No. 3 (1987), 429-459.

\bibitem[Hat]{Hat} A. Hatcher, A proof of the Smale conjecture, ${\rm Diff}(S^3)\simeq O(4)$, \emph{Ann. of Math. (2)} \textbf{117} (1983), no. 3, 553-607. 

\bibitem[H]{H} R. Huang, Loop homotopy of $6$-manifolds over $4$-manifolds, \emph{Algebr. Geom. Topol.} 
   \textbf{23} (2023), 2369-2388. 


\bibitem[HMR]{HMR} P. Hilton, G. Mislin and J. Roitberg, Localization of nilpotent groups and spaces, \emph{North-Holland Math. Studies} \textbf{15}, \emph{Notes on Math.} \textbf{55}, North-Holland, Amsterdam (1975). 

\bibitem[HT1]{HT1} R. Huang and S. Theriault, Loop space decompositions of $(2n-2)$-connected 
   $(4n-1)$-dimensional Poincar\'{e} duality complexes, \emph{Res. Math. Sci.} \textbf{9} (2022), 
   Paper No. 53, 24pp. 

\bibitem[HT2]{HT2} R. Huang and S. Theriault, Exponential growth in the rational homology of free loop spaces and in torsion homotopy groups, to appear in \emph{Annales de l'Institut Fourier}. arXiv:2105.04426.

\bibitem[HT3]{HT3} R. Huang and S. Theriault, Homotopy of manifolds stabilized by projective spaces, \emph{J. Topology} \textbf{16} (3) (2023), 1237-1257.

\bibitem[Kau]{Kau} L. Kauffman, Branched coverings, open books and knot periodicity, \emph{Topology} \textbf{13} (1974), 143-160.

\bibitem[KK]{KK} L. H. Kaufmann and N. A. Krylov, Kernel of the variation operator and periodicity of open books, \emph{Topology Appl.} \textbf{148} (2005), 183-200.

\bibitem[KM]{KM} M. A. Kervaire and J. W. Milnor, Groups of homotopy spheres, \emph{Ann. of Math. (2)} \textbf{77} (1963), 504-537.
  
\bibitem[M]{M} M. Mather, Pull-backs in homotopy theory, \emph{Canad. J. Math.}
   \textbf{28} (1976), 225-263. 
   
%\bibitem[Mur]{Mur} E. Murphy, Loose Legendrian embeddings in high dimensional contact manifolds, arXiv:1201.2245.
   
\bibitem[Mil]{Mil} J. W. Milnor, Points of Complex Hypersurfaces, 
\emph{Annals of Mathematics Studies} \textbf{61} Princeton, NJ; Tokyo: Princeton University Press; University of Tokyo Press, 1968.   

\bibitem[Mis]{Mis} G. Mislin, Finitely dominated nilpotent spaces, \emph{Ann. of Math. (2)} \textbf{103} (1976) no. 3, 547-556. 

\bibitem[MV]{MV} B. A. Munson and I. Voli\'{c}, Cubical homotopy theory, 
\emph{New Mathematical Monographs} \textbf{25}, Cambridge University Press, (2015). 

\bibitem[PT]{PT} T. Panov and S. Theriault, The homotopy theory of polyhedral products associated 
   with flag complexes, \emph{Compositio} \textbf{155} (2019), 206-228. 

\bibitem[Q]{Q} F. Quinn, Open book decompositions, and the bordism of automorphisms, \emph{Topology} \textbf{18} (1979), 55-73.

\bibitem[Ran]{Ran} A. Ranicki, High-dimensional knot theory, Springer-Verlag, 1998. 

\bibitem[Rao]{Rao} V. Rao, Nilpotence of homotopy pushouts, \emph{Proc. Amer. Math. Soc.} 
   \textbf{87} (1983), 335-341. 
   
\bibitem[S]{S} S. Smale, Diffeomorphisms of the $2$-sphere, \emph{Proc. Amer. Math. Soc.} \textbf{10} (1959), 621-626.

\bibitem[T]{T} S. Theriault, Homotopy fibrations with a section after looping, to appear in 
\emph{Mem. Amer. Math. Soc.}, arBiv:2005.11570. 

\bibitem[Wa]{Wa} T. Watanabe, Some exotic nontrivial elements of the rational homotopy groups of ${\rm Diff}(S^4)$, preprint, 2018, arXiv:1812.02448.
   
\bibitem[W]{W} H. E. Winkelnkemper, Manifolds as open books, 
\emph{Bull. Amer. Math. Soc.} \textbf{79} (1973), 45-51.

\end{thebibliography}

\end{document}